\documentclass[]{interact}

\usepackage[caption=false]{subfig}

\usepackage[numbers,sort&compress]{natbib}
\bibpunct[, ]{[}{]}{,}{n}{,}{,}

\usepackage{subfig}
\usepackage{graphicx}
\usepackage{xcolor}
\usepackage{soul}
\usepackage{lscape}

\theoremstyle{plain}
\newtheorem{theorem}{Theorem}[section]
\newtheorem{lemma}[theorem]{Lemma}

\theoremstyle{definition}

\newtheorem{assumption}[theorem]{Assumption}
\newtheorem{algorithm}[theorem]{Algorithm}
\usepackage{algorithm}
\usepackage{algpseudocode}
\usepackage{amsmath}
\usepackage{amsthm}
\usepackage{amsfonts}
\theoremstyle{remark}

\usepackage{booktabs}
\usepackage{multirow}

\begin{document}


\title{
\begin{center} \Large
A family of spectral conjugate gradient algorithms derived by least-squares approximations based on a modified quasi--Newton update with application to a revised robust binary classification model
\end{center}}

\author{
\begin{center}
\name{Saman Babaie--Kafaki\textsuperscript{a}{*}\thanks{\textsuperscript{*}Corresponding author (Email: \texttt{saman.babaiekafaki@unibz.it})}, 
Maryam Khoshsimaye--Bargard\textsuperscript{b}, 
and Ahmad Mousavi\textsuperscript{c}}
\affil{\textsuperscript{a}\emph{Faculty of Engineering, Free University of Bozen--Bolzano, 39100 Bolzano, Italy}}
\affil{\textsuperscript{b}\emph{Department of Mathematics, Faculty of Mathematics, Statistics and Computer Science, Semnan University, Semnan, Iran}}
\affil{\textsuperscript{c}\emph{Department of Mathematics and Statistics, American University, Washington, D.C., USA}}
\end{center}
}

\maketitle

\begin{abstract}
We develop a spectral three-term modification of the classic Hestenes--Stiefel conjugate gradient algorithm, preserving its anti-jamming characteristic and, simultaneously, taking care of the sufficient descent property. We discuss how a modified secant equation can be extracted from our modification scheme, yielding a memoryless BFGS  updating formula. Then, the spectral parameter of our method is obtained by steering its direction toward the given BFGS direction within a least-squares context. Using our technical improvements, we outline the general framework of our algorithm and discuss its theoretical features, including the descent and convergence properties, without the convexity assumption. We put our algorithm to the test in comparison with the three other conjugate gradient algorithms on a set of CUTEr unconstrained optimization test models, comparing the outputs using the Dolan--Mor{\'e} measure. Next, we provide a concise evaluation of the results, highlighting the practical advantages of our algorithm. As a real-world case study, we introduce a reduced quadratic surface SVM with the rescaled loss for robust binary classification and apply the proposed algorithm to assess its accuracy and training time against several other SVMs.\\

\noindent \textbf{Keywords:} Nonlinear programming, conjugate gradient method, quasi{--}Newton update, least-squares model, quadratic surface SVM, robust binary classification.\\

\noindent \textbf{MSC 2020:} 90C06; 49M37; 68T05.

\end{abstract}

\section{Introduction}

Due to abundant applications of nonlinear optimization, especially in the current practical trends of machine learning and data mining, a huge range of studies has been recently dedicated to developing efficient algorithms for nonlinear decision-making models. The experimental observations in the vast majority of the real-world cases show that the need for memoryless (limited-memory) schemes to handle the current high-dimensional optimization models is exacerbated. Being now a viable and prevalent plan in practical applications, memoryless approaches should be designed by a conscious and judicious trade-off between accuracy and efficiency, two factors that often place in opposition to each other. It is also a reasonable matter of routine to keep looking at the diversity and inclusion in the optimization tools to flexibly address various models arising continuously and frequently in different data analysis disciplines.

The huge growth in the dimension of practical nonlinear programming models by the beginning of the current century pushed scholars to broaden their horizons and change their classic way of thinking toward memoryless Hessian approximations. A careful readout of the literature reveals a great deal of interest in devising memoryless algorithms to address the well-known unconstrained optimization model, i.e.
\begin{equation}\label{UP}
  \min_{x\in \mathbb{R}^n}\ f(x),
\end{equation}
as the core of nonlinear optimization problems, where $f$ is called the cost function, here assumed to be smooth. In such studies, mainly a meaningful plan can be observed to impede the expected computational consequences of directly dealing with the second-order information of the model, which is often dense and complicated.

Among the strategies that implicitly benefit from the Hessian information for solving (\ref{UP}), there exist the conjugate gradient (CG) algorithms with the iterations
\begin{equation}\label{xk}
x_{k+1}=x_{k}+s_{k},\qquad \forall k\geq0,
\end{equation}
starting from some point $x_{0}\in\mathbb{R}^n$, where the step $s_k$ is essentially determined as a multiplication of the two following elements \cite{Andrei}:
\begin{itemize}
\item [($i$)] the search direction $d_k$ defined by
\begin{eqnarray}\label{dk}
d_{0}=-g_{0},\qquad d_{k+1}=-g_{k+1}+\beta_{k}d_k,\qquad \forall k\geq0,
\end{eqnarray}
where $g_k=\nabla f(x_k)$ and $\beta_{k}$ is referred to as the CG parameter;

\item [($ii$)] the step length $\alpha_k$ obtained through a line search in the direction $d_k$.
\end{itemize}

One of the measures that has significantly influenced the development of distinct CG techniques during the past decades is the way of setting the CG parameter $\beta_{k}$ \cite{HagerZhang1}. Among the classic practically effective choices of $\beta_{k}$, we can mention that introduced by Polak, Ribi\`{e}re and Polyak (PRP)  \cite{{PolakRibiere},{Polyak}}, i.e. $\beta_{k}^\textrm{PRP}={\left(g_{k+1}^Ty_{k}\right)}/{\left\Vert g_{k}\right\Vert^2}$, as well as that of Hestenes and Stiefel (HS) \cite{HestenesStiefel}, i.e. $\beta_{k}^\textrm{HS}={\left(g_{k+1}^Ty_{k}\right)}/{\left(d_{k}^Ty_{k}\right)}$, where $y_{k} = g_{k+1} - g_{k}$ denotes the gradient displacement vector, and $\|.\|$ represents the Euclidean norm.
However, even under convexity suppositions, the corresponding CG methods fail to guarantee the sufficient descent condition, defined by
\begin{equation}\label{sufficient descent condition 1}
g_{k}^Td_{k}\leq-\varsigma\Vert g_{k}\Vert^2,\qquad \forall k\geq0,
\end{equation}
where $\varsigma$ is a positive constant \cite{HagerZhang1}. This essential defect pushed the researchers to create some clear theoretical foundations to achieve (\ref{sufficient descent condition 1}) by devising extended versions of the PRP and HS methods. Among such insightful plans, the Dai--Liao (DL) approach \cite{Babaie} and the three{-}term expansions \cite{SugikiNarushimaYabe} thoroughly revolutionized the CG literature in computational and theoretical respects. In particular, the basic version of the DL parameter is defined by $\beta_{k}^\textrm{DL}=\beta_{k}^\textrm{HS}-t{\left(g_{k+1}^Ts_k\right)}/{\left(d_k^Ty_k\right)}$ \cite{DaiLiao}, with $t$ as a nonnegative parameter, which could be appropriately set to ensure (\ref{sufficient descent condition 1})  \cite{Babaie}. Moreover, Zhang, Zhou and Li (ZZL) \cite{ZhangZhouLi} developed the following three-term version of the HS search direction:
\begin{equation}\label{dZZLHS}
d_{0}=-g_{0},\qquad d_{k+1}^\textrm{ZZL}=-g_{k+1}+\beta_{k}^\textrm{HS}d_{k}-\dfrac{g_{k+1}^Td_{k}}{d_{k}^Ty_{k}}y_{k},\qquad \forall k\geq0,
\end{equation}
which satisfies an equality version of \eqref{sufficient descent condition 1} with $\varsigma = 1$.

Here, inspired by the study by Chen, Kuang, and Yan \cite{ChenKuangYan} on the PRP method, we first develop a modified version of the HS parameter by expanding the denominator of $\beta_{k}^\textrm{HS}$. Then, to satisfy the sufficient descent condition \eqref{sufficient descent condition 1}, a three-term extension of the given (CG) direction is proposed. To more directly benefit from the second-order information of the cost function, a spectral parameter is embedded on the direction of the algorithm \cite{BirginMartinez}. Afterward, by introducing a modified secant equation, the spectral parameter is derived by pushing the given direction toward that of the well-known (memoryless) quasi--Newton (QN) updating formulas in a least-squares framework. As another major part of our study, we present a concise assessment of the results, emphasizing the practical benefits of our algorithm. That is, as a real-world case study, we propose a reduced quadratic surface Support Vector Machine (SVM) equipped with the rescaled loss for robust binary classification \cite{Aminifard2027}. Then, we employ the proposed algorithm to evaluate its classification accuracy and training time in comparison with several other SVM models.

The structure of this work is as follows: Section \ref{main} is devoted to the development of a spectral method within the algorithmic context of three-term CG strategies. Section \ref{algorithm} contains the necessary theoretical analysis, such as establishing the sufficient descent condition and the global convergence of the proposed algorithm. Section \ref{numerical} outlines the effects observed from comparative computational experiments carried out on a collection of standard test functions. Our revised quadratic surface SVM model for robust binary classification is presented and subsequently evaluated in Section \ref{sec:rqsvm}. Ultimately, Section \ref{conclusions} encapsulates the general conclusions of this study.

\section{Developing a class of spectral three-term conjugate gradient techniques}\label{main}

It is widely discussed in the literature that, despite the capability to automatically restart when the iterations are stuck far from a solution of (\ref{UP}) in the sense of jamming \cite{BabaieMirhoseiniAminifard}, leading to favorable computational outputs, the PRP and HS methods suffer from some poor theoretical properties \cite{HagerZhang1}. As already mentioned, the theoretical weaknesses of the PRP and HS techniques are mainly rooted in the possibility of generating uphill (nondescent) search directions. Amidst the efforts to combat this essential defect \cite{Babaie}, recently Chen, Kuang and Yan \cite{ChenKuangYan} suggested the following modified PRP parameter:
\begin{equation*}
\beta_k^\textrm{MPRP}=\dfrac{g_{k+1}^Ty_{k}}{\Vert g_{k}\Vert^2+\mu\Vert y_{k}\Vert\Vert d_{k}\Vert},
\end{equation*}
in which $\mu>1$ is a constant. Actually, in contrast to the similar studies conducted in \cite{{WeiYaoLiu},{YaoWeiHuang}} with some modifications in the numerator of $\beta_k^\textrm{PRP}$ or $\beta_k^\textrm{HS}$, Chen, Kuang and Yan \cite{ChenKuangYan} only extended the denominator with the purpose of guaranteeing the sufficient descent condition \eqref{sufficient descent condition 1}. By such a plan, they made it possible to maintain the anti-jamming (restart) characteristic for the MPRP method as well. It should be highlighted that MPRP has been shown to be globally convergent for nonconvex cost functions when using either Wolfe or the Armijo line search strategies \cite{Andrei}.

Now, recalling the structural similarity of the PRP and HS parameters, and inspired by the argument presented in \cite{ChenKuangYan}, we propose a modified version of the HS parameter as
\begin{equation}\label{dMHS}
\beta_k^\textrm{MHS}=\dfrac{g_{k+1}^Ty_{k}}{d_{k}^Ty_{k}+\zeta\Vert y_{k}\Vert\Vert d_{k}\Vert},
\end{equation}
in which $\zeta>0$ is called the MHS constant. Meanwhile, to make it possible to achieve the sufficient descent condition, we develop the following modified spectral three-term version of the MHS direction:
\begin{equation}\label{dSTTMHS}
d_{k+1}^\textrm{MSTTMHS}=-\eta_{k}g_{k+1}+\dfrac{g_{k+1}^Ty_{k}}{s_{k}^Ty_{k}+\zeta\Vert y_{k}\Vert\Vert s_{k}\Vert}s_{k}-\dfrac{g_{k+1}^Ts_{k}}{s_{k}^Ty_{k}+\zeta\Vert y_{k}\Vert\Vert s_{k}\Vert}y_{k},\qquad \forall k\geq0,
\end{equation}
starting with $d_{0}=-g_{0}$, where $\eta_{k}>0$ is the spectral parameter. Next, we describe a judicious scheme for adaptively setting $\eta_{k}$, benefiting from the QN aspects.

QN methods represent one of the most widely used categories of iterative algorithms for solving \eqref{UP}, characterized by approximately employing the second derivatives of the cost function, superlinear convergence, and desirable computational performance. The direction used for iterative searching in QN algorithms is formulated by
\begin{equation}\label{dQN}
d_{0}=-g_{0},\qquad d_{k+1}^\textrm{QN}=-\mathrm{H}_{k+1}g_{k+1},\qquad \forall k\geq0,
\end{equation}
in which $\mathrm{H}_{k+1}$ corresponds to an approximate form of the matrix $\nabla^2f(x_{k+1})^{-1}$. The QN methods benefit from the second derivatives basically through the secant equation \cite{Andrei}, i.e. $\mathrm{H}_{k+1}y_k = s_k$, by a recursive updating formula. Among the QN updating formulas, the BFGS and DFP formulas are the most well-known \cite{Andrei}.

As is known, the accuracy of the (inverse) Hessian approximations of the QN algorithms is closely connected to the quality of the corresponding secant equation. In recent decades, several adaptations of the traditional secant equation have been suggested \cite{Babaie}, with the aim of taking advantage of the cost function values, establishing convergence without any assumption of convexity, and employing the data available from more than one recent iteration. Rather than studies that have been founded upon the Taylor expansion, here we take care of the formal similarity between the HS formula and the memoryless BFGS updating formula, as well as the structure of $\beta_k^\textrm{MHS}$ given by (\ref{dMHS}), to suggest the extended secant equation $\mathrm{H}_{k+1}z_k = s_k$, with
\begin{equation*}\label{zeta}
   z_k = y_k + \zeta \dfrac{\left\Vert y_k\right\Vert}{\left\Vert s_k\right\Vert}s_k.
\end{equation*}

The suggested equation can also be regarded as a member of the class of modified secant equations proposed by Li and Fukushima \cite{LiFukushima}. Note that, $s_k^Tz_k$ is exactly the denominate of $\beta_k^\textrm{MHS}$ in the sense that $\beta_k^\textrm{MHS}={\left(g_{k+1}^Ty_{k}\right)}/{\left(s_k^Tz_k\right)}$. Hence, again referring to the structural similarity of the HS and BFGS search directions, we give the following modified version of the memoryless BFGS (MMLBFGS) formula, as an extension of the MHS search direction matrix:
\begin{equation*}
\mathrm{H}_{k+1}^\textrm{MMLBFGS}=\mathrm{I}-\dfrac{s_{k}y_{k}^T+y_{k}s_{k}^T}{s_{k}^Tz_{k}}+\left(1+\dfrac{\Vert y_{k}\Vert^2}{s_{k}^Tz_{k}}\right)\dfrac{s_{k}s_{k}^T}{s_{k}^Tz_{k}}.
\end{equation*}
Moreover, the MSTTMHS search direction given by (\ref{dSTTMHS}) is now immediately rewritten as
\begin{equation}\label{dnew}
d_{k+1}^\textrm{MSTTMHS}=-\eta_{k}g_{k+1}+\dfrac{g_{k+1}^Ty_{k}}{s_{k}^Tz_{k}}s_{k}-\dfrac{g_{k+1}^Ts_{k}}{s_{k}^Tz_{k}}y_{k},\qquad \forall k\geq0.
\end{equation}
The remaining issue is now attaining a meaningful formula for computing the spectral parameter $\eta_{k}$.

Advantages of the BFGS method in various computational and theoretical aspects, compared to the other QN algorithms, are reasonably enough to motivate us to determine the spectral parameter $\eta_{k}$ in \eqref{dnew} by adjusting the MSTTMHS direction to be nearly similar to the memoryless BFGS direction. To this end, in accordance with the norms of the literature, a straightforward and helpful (solution-oriented) plan is to benefit from the least-squares models \cite{DaiKou}, an approach that opens up opportunities to boost the empirical performance of the MSTTMHS and, consequently, supports the flexibility of the algorithm. So, here, in a vector model, the parameter $\eta_{k}$ is computed by solving
\begin{equation}\label{mind}
\min_{\eta_{k}}\psi(\eta_{k}):=\left\Vert d_{k+1}^\textrm{MSTTMHS}-d_{k+1}^\textrm{MMLBFGS}\right\Vert^{2},
\end{equation}
where, considering \eqref{dQN}, is equivalent to
\begin{equation*}
\min_{\eta_{k}}\psi(\eta_{k})=\left\Vert g_{k+1}\right\Vert^2\eta_{k}^2-2\eta_{k}\left(g_{k+1}^T\mathrm{H}_{k+1}^\textrm{MMLBFGS}g_{k+1}\right)+\xi,
\end{equation*}
in which $\xi$ is a real-valued constant not dependent on $\eta_{k}$. Now, the unique solution of the minimization problem \eqref{mind} can be formulated by
\begin{equation}\label{eta1}
\eta_{k}^*=1-2\dfrac{(g_{k+1}^Ty_{k})(g_{k+1}^Ts_{k})}{\Vert g_{k+1}\Vert^2(s_{k}^Tz_{k})}+\left(1+\dfrac{\Vert y_{k}\Vert^2}{s_{k}^Tz_{k}}\right)\dfrac{(g_{k+1}^Ts_{k})^2}{\Vert g_{k+1}\Vert^2(s_{k}^Tz_{k})}.
\end{equation}

Since the algorithm that we are constructing has strong technical ties with the spectral gradient algorithm, the remaining concern is to enforce positivity of the parametric setting of (\ref{eta1}). Meanwhile, the classic results emphasize that the spectral parameter should be uniformly bounded. Thus, to overcome these concerns, we can set
\begin{equation}\label{etabound}
\bar{\eta}_{k}=\max\left\lbrace m,\min\left\lbrace M,\eta_{k}^*\right\rbrace\right\rbrace,
\end{equation}
where $m$ and $M$ are respectively some prespecified small and large positive constants.

\section{A spectral three-term conjugate gradient algorithm: Analysis of descent and convergence characteristics}\label{algorithm}

Here, based on the available results (of the previous section), we first hierarchically summarize the organization of our spectral CG algorithm. Next, we prove the sufficient descent condition (\ref{sufficient descent condition 1}) for the given algorithm and present the analysis resulting in global convergence.

\begin{algorithm}[ht!]
\caption{\textbf{(MSTTMHS)}: A Modified Spectral Three-Term Extension of MHS }
\label{Algorithm 1}
\begin{algorithmic}[1] 
\Require Initial guess $x_0 \in \mathbb{R}^n$,
  MHS constant $\zeta > 0$, line search constants $\delta \in (0,1)$ and $\sigma \in (\delta,1)$, lower bound $m > 0$ and upper bound $M > 0$ for the spectral parameter, and tolerance $\varepsilon > 0$.
\State $d_0 \gets -g_0$
\State $k \gets 0$
\While{$\|g_k\| \geq \varepsilon$}
    \State Calculate $\alpha_k$ (by a line search) satisfying the Wolfe conditions, i.e.
    \begin{eqnarray}
   \label{wolfe1}    &\ & f(x_k + \alpha_k d_k) - f(x_k) \le \delta\,\alpha_k\left(g_k^T d_k\right), \\
  \label{wolfe2}     &\ &  \nabla f(x_k + \alpha_k d_k)^T d_k\ge \sigma \left(g_k^T d_k\right).
    \end{eqnarray}
    \State Update the next iterate using (\ref{xk}).
    \State Compute the spectral parameter using (\ref{etabound}).
    \State Calculate the search direction using (\ref{dnew}).
    \State $k \gets k + 1$
\EndWhile
\State \Return $x_k$
\end{algorithmic}
\end{algorithm}

A reasonable expectation of the current unconstrained optimization algorithms is to guarantee the sufficient descent condition (\ref{sufficient descent condition 1}). The next lemma demonstrates such a feature for MSTTMHS, needless to any convexity assumption, being one of the main goals of our study as well.

\begin{lemma}\label{sufficient}
For Algorithm \ref{Algorithm 1} (MSTTMHS), the sufficient descent condition \eqref{sufficient descent condition 1} holds.
\end{lemma}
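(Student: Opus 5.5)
Compute $g_{k+1}^Td_{k+1}$ directly from the rewritten direction (\ref{dnew}). Multiplying (\ref{dnew}) by $g_{k+1}^T$, the two three-term pieces contribute
\begin{equation*}
\dfrac{(g_{k+1}^Ty_k)(g_{k+1}^Ts_k)}{s_k^Tz_k}-\dfrac{(g_{k+1}^Ts_k)(g_{k+1}^Ty_k)}{s_k^Tz_k}=0,
\end{equation*}
exactly as in the ZZL direction (\ref{dZZLHS}). Hence $g_{k+1}^Td_{k+1}=-\bar{\eta}_k\Vert g_{k+1}\Vert^2$, and by (\ref{etabound}) we have $\bar{\eta}_k\ge m$, so $g_{k+1}^Td_{k+1}\le -m\Vert g_{k+1}\Vert^2$. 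For $k=0$, $d_0=-g_0$ gives $g_0^Td_0=-\Vert g_0\Vert^2$. Therefore (\ref{sufficient descent condition 1}) holds with $\varsigma=\min\{1,m\}$.

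The only point needing care, and the main obstacle, is that the direction must be well defined, i.e.\ $s_k^Tz_k\neq 0$ (indeed $>0$). First, $s_k^Tz_k=s_k^Ty_k+\zeta\Vert y_k\Vert\Vert s_k\Vert$. Next, I would argue by induction: if $g_k^Td_k<0$, then the Wolfe condition (\ref{wolfe2}) gives $d_k^Ty_k\ge(\sigma-1)g_k^Td_k>0$, so $s_k^Ty_k=\alpha_kd_k^Ty_k>0$ and thus $s_k^Tz_k>0$, with $\alpha_k>0$ and $s_k\neq0$ whenever $g_k\neq0$. The base case is $g_0^Td_0<0$. The inductive step follows from the computation above, since $g_{k+1}\neq0$ while the algorithm runs. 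Even without this, $s_k^Tz_k\ge(\zeta-1)\Vert y_k\Vert\Vert s_k\Vert$ by Cauchy--Schwarz, but the Wolfe argument suffices for any $\zeta>0$.

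Finally, I would note that the descent estimate uses neither convexity nor the specific value $\eta_k^*$ from (\ref{eta1}). Only the lower safeguard $m$ in (\ref{etabound}) matters, which is why the lemma holds for the whole family regardless of how the least-squares value behaves.
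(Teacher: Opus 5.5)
Your proposal is correct and follows essentially the same route as the paper: the two three-term contributions cancel in $g_{k+1}^Td_{k+1}$, leaving $-\bar{\eta}_k\Vert g_{k+1}\Vert^2\le -m\Vert g_{k+1}\Vert^2$, and with $d_0=-g_0$ this gives $\varsigma=\min\{m,1\}$. Your inductive check that $s_k^Tz_k>0$ via the curvature condition \eqref{wolfe2} adds rigor that the paper's proof omits; the paper only mentions this point in a remark after stating its assumptions.
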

\begin{proof}
For $k=0$, inequality \eqref{sufficient descent condition 1} intrinsically holds in the equality form with $\varsigma = 1$, since the first direction is actually the steepest descent (negative gradient) direction. Also, when $k\geq0$, taking the inner product of \( g_{k+1} \) with both sides of \eqref{dnew}, we have
\begin{equation*}
    g_{k+1}^Td_{k+1}^\textrm{MSTTMHS}=-\bar{\eta}_{k}\Vert g_{k+1}\Vert^2+\dfrac{g_{k+1}^Ty_k}{s_k^Tz_k}g_{k+1}^Ts_{k}-\dfrac{g_{k+1}^Ts_k}{s_k^Tz_k}g_{k+1}^Ty_{k}\leq-m\Vert g_{k+1}\Vert^2.
\end{equation*}
So, the setting $\varsigma = \min\{m,1\}$ ensures \eqref{sufficient descent condition 1}.
\end{proof}

The sufficient descent condition is generally expected to lead to global convergence. To address this issue for Algorithm \ref{Algorithm 1}, the subsequent standard assumptions are required as well.

\begin{assumption} \label{Assum1}
 $\mathcal{A}_0=\left\lbrace x\in \mathbb{R}^n| f (x) \leq f (x_{0})\right\rbrace$ is a bounded set.
\end{assumption}

\begin{assumption} \label{Assum2}
There is some neighborhood $\mathcal{S}$ containing $\mathcal{A}_0$ in which $\nabla f$ is Lipschitz continuous; i.e.,
\begin{equation*}\label{lipschitz}
\Vert \nabla f(x)-\nabla f(\breve{x})\Vert\leq L\Vert x-\breve{x}\Vert,\qquad \forall x,\breve{x}\in \mathcal{S},
\end{equation*}
for a positive (Lipschitz) constant $L$.
\end{assumption}

Note that the condition (\ref{wolfe1}) keeps the iterations in the level set $\mathcal{A}_0$, and the condition (\ref{wolfe2}) ensures that $s_{k}^Ty_{k}>0$. Also, under Assumptions \ref{Assum1} and \ref{Assum2}, it can be observed that there exists a constant $\tau>0$ which makes a bound for the norm of the gradient within $\mathcal{A}_0$ \cite{BabaieFatemiMahdavi}; i.e.
\begin{equation}\label{lip}
\Vert \nabla f(x)\Vert\leq\tau,\qquad \forall x\in \mathcal{A}_0.
\end{equation}
The following result on global convergence can now be established, without any convexity assumption.

\begin{theorem}
If Assumptions \ref{Assum1} and \ref{Assum2} hold, then Algorithm \ref{Algorithm 1} converges in the sense that $\displaystyle\liminf_{k\rightarrow\infty}\Vert g_{k}\Vert = 0$.
\end{theorem}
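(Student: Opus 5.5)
The plan is the standard Zoutendijk-type argument: combine the sufficient descent property of Lemma \ref{sufficient} with a uniform bound of the form $\Vert d_{k}\Vert\leq c\Vert g_{k}\Vert$. The key observation is that the enlarged denominator $s_k^Tz_k$ makes such a bound available directly, without any convexity assumption.

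\textbf{Step 1 (Zoutendijk condition).} Under Assumptions \ref{Assum1} and \ref{Assum2}, the Wolfe conditions (\ref{wolfe1})--(\ref{wolfe2}) and the descent property $g_k^Td_k<0$ from Lemma \ref{sufficient} give the classical Zoutendijk condition
\begin{equation*}
\sum_{k\geq0}\dfrac{(g_k^Td_k)^2}{\Vert d_k\Vert^2}<\infty .
\end{equation*}
The derivation is standard. From (\ref{wolfe2}) and the Lipschitz continuity we get $(1-\sigma)(-g_k^Td_k)\leq L\alpha_k\Vert d_k\Vert^2$, which yields a lower bound on $\alpha_k$. Inserting it into (\ref{wolfe1}) and summing, we use that $f$ is bounded below on the bounded set $\mathcal{A}_0$ containing all iterates.

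\textbf{Step 2 (bounding the direction).} By (\ref{wolfe2}) we have $s_k^Ty_k>0$, so in particular $y_k\neq0$ and $s_k\neq0$. Hence
\begin{equation*}
s_k^Tz_k=s_k^Ty_k+\zeta\Vert y_k\Vert\Vert s_k\Vert\geq\zeta\Vert y_k\Vert\Vert s_k\Vert>0,
\end{equation*}
so (\ref{dnew}) is well defined. Applying Cauchy--Schwarz to each term of (\ref{dnew}) gives
\begin{equation*}
\Vert d_{k+1}\Vert\leq\bar\eta_k\Vert g_{k+1}\Vert+\dfrac{\Vert g_{k+1}\Vert\Vert y_k\Vert\Vert s_k\Vert}{\zeta\Vert y_k\Vert\Vert s_k\Vert}+\dfrac{\Vert g_{k+1}\Vert\Vert s_k\Vert\Vert y_k\Vert}{\zeta\Vert y_k\Vert\Vert s_k\Vert}\leq\left(M+\dfrac{2}{\zeta}\right)\Vert g_{k+1}\Vert .
\end{equation*}
Here we used $\bar\eta_k\leq M$ from (\ref{etabound}). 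For $k=0$ we have trivially $\Vert d_0\Vert=\Vert g_0\Vert$, so with $c:=\max\{1,M+2/\zeta\}$ the bound $\Vert d_k\Vert\leq c\Vert g_k\Vert$ holds for all $k$.

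\textbf{Step 3 (conclusion).} Lemma \ref{sufficient} gives $-g_k^Td_k\geq\varsigma\Vert g_k\Vert^2$, and Step 2 gives $\Vert d_k\Vert\leq c\Vert g_k\Vert$. Substituting both into the Zoutendijk condition yields
\begin{equation*}
\sum_k\dfrac{\varsigma^2}{c^2}\Vert g_k\Vert^2<\infty .
\end{equation*}
Hence $\Vert g_k\Vert\to0$, which in particular gives $\liminf_{k\to\infty}\Vert g_k\Vert=0$. If instead the algorithm stops finitely with $g_k=0$, there is nothing to prove.

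I expect no serious obstacle. The only point requiring care is Step 2. There we must justify that $y_k\neq0$, so the denominator is nonzero, and note that the $\zeta$-term in $s_k^Tz_k$ exactly cancels the factor $\Vert y_k\Vert\Vert s_k\Vert$ in the numerators. This cancellation is what lets us avoid the usual bounds on $\Vert s_k\Vert$ via the level set and on $\Vert y_k\Vert$ via $L$. Bound (\ref{lip}) is not even needed.
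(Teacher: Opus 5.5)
Your proof is correct, and its core is the same as the paper's: the Cauchy--Schwarz estimate on (\ref{dnew}), in which the term $\zeta\Vert y_k\Vert\Vert s_k\Vert$ of $s_k^Tz_k$ cancels the numerators, combined with Lemma \ref{sufficient} and the Wolfe conditions. You differ in how that estimate is used.

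The paper replaces $\Vert g_{k+1}\Vert$ by $\tau$ from (\ref{lip}), getting the absolute bound $\Vert d_{k+1}\Vert\leq(M+2/\zeta)\tau$. It then hands off to Lemma 3.1 of \cite{SugikiNarushimaYabe}, which turns bounded directions plus sufficient descent into $\liminf_{k\to\infty}\Vert g_k\Vert=0$. This is short and follows the standard CG template, but the cited lemma only delivers the $\liminf$.

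You keep the relative bound $\Vert d_k\Vert\leq c\Vert g_k\Vert$. Together with sufficient descent, this is a uniform angle condition $-g_k^Td_k\geq(\varsigma/c)\Vert g_k\Vert\Vert d_k\Vert$. You then prove the Zoutendijk condition directly, which gives $\sum_k\Vert g_k\Vert^2<\infty$. Hence you obtain the strictly stronger conclusion $\lim_{k\to\infty}\Vert g_k\Vert=0$, without needing (\ref{lip}) or the external lemma.

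Incidentally, the gain comes mainly from applying Zoutendijk directly rather than through the cited lemma. Even the paper's absolute bound would yield $\sum_k\Vert g_k\Vert^4<\infty$, and hence the full limit, that way. Your argument is self-contained and shows that the theorem as stated undersells what the method actually guarantees.
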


\begin{proof}
It follows from the Cauchy--Schwarz inequality that
\begin{eqnarray*}
   \left\Vert d_{k+1}^\textrm{MSTTMHS}\right\Vert&=&\left\Vert-\bar{\eta}_{k}g_{k+1}
   +\dfrac{g_{k+1}^Ty_{k}}{s_{k}^Tz_{k}}s_{k}-\dfrac{g_{k+1}^Ts_{k}}{s_{k}^Tz_{k}}y_{k}\right\Vert\\  \\
   &\leq& \bar{\eta}_{k}\left\Vert g_{k+1}\right\Vert+\dfrac{\left\Vert g_{k+1}\right\Vert\left\Vert y_{k}\right\Vert}{s_{k}^Ty_{k}+\zeta\Vert y_k\Vert\Vert s_k\Vert}\left\Vert s_{k}\right\Vert\\
   &&\hspace{1.7cm} +\dfrac{\left\Vert g_{k+1}\right\Vert\left\Vert s_{k}\right\Vert}{s_{k}^Ty_{k}+\zeta\Vert y_k\Vert\Vert s_k\Vert}\left\Vert y_{k}\right\Vert.
\end{eqnarray*}
Now, since $s_{k}^Ty_{k}>0$, from \eqref{etabound} and \eqref{lip}, we have
\begin{equation*}
    \left\| d_{k+1}^\textrm{MSTTMHS}\right\|\leq\left(M+\dfrac{2}{\zeta}\right)\tau,
\end{equation*}
which ensures that the search directions are uniformly bounded from above. So, by Lemma 3.1 of \cite{SugikiNarushimaYabe}, the desired result is thus proven.
\end{proof}

\section{Experimental results}\label{numerical}

In this section, we investigate the computational efficiency of Algorithm \ref{Algorithm 1}. To this aim, we compare MSTTMHS, with three other iterative methods of the form (\ref{xk}) characterized by MTTMHS with the direction (\ref{dnew}) in which $\eta_k=1$, the ZZL method with the search direction (\ref{dZZLHS}), and the CG method MHS with the direction (\ref{dk}) wherein the CG parameter is calculated by (\ref{dMHS}). In our implementations, we let $M = {m}^{-1} = 10^{-8}$ in \eqref{etabound}, and $\zeta=1.1$.

\begin{figure}[!ht]
\centering
\subfloat[TNFGE]
{\includegraphics*[width=.8\textwidth]
{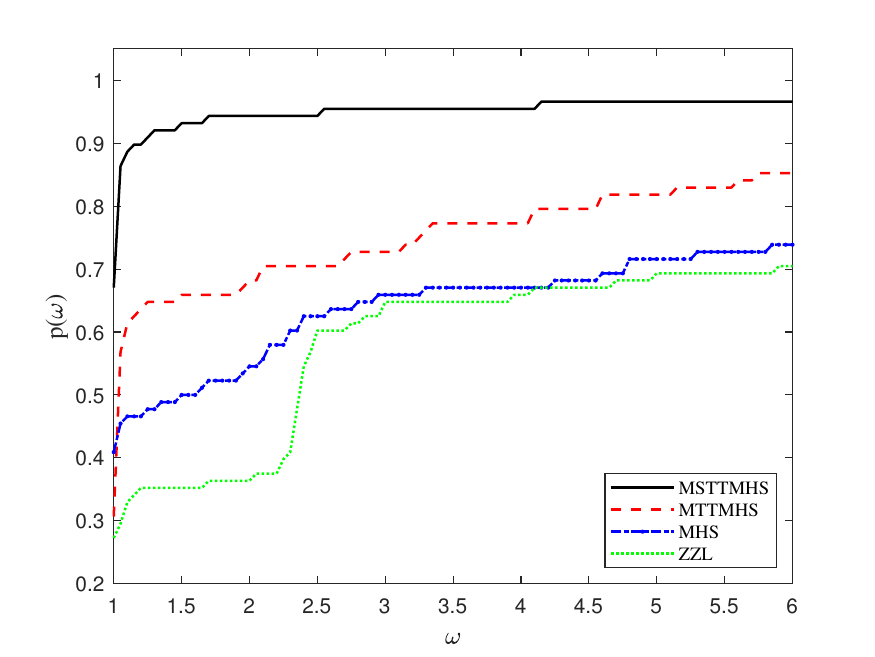}}
\hspace{3mm}
\subfloat[Running time]
{\includegraphics*[width=.8\textwidth]
{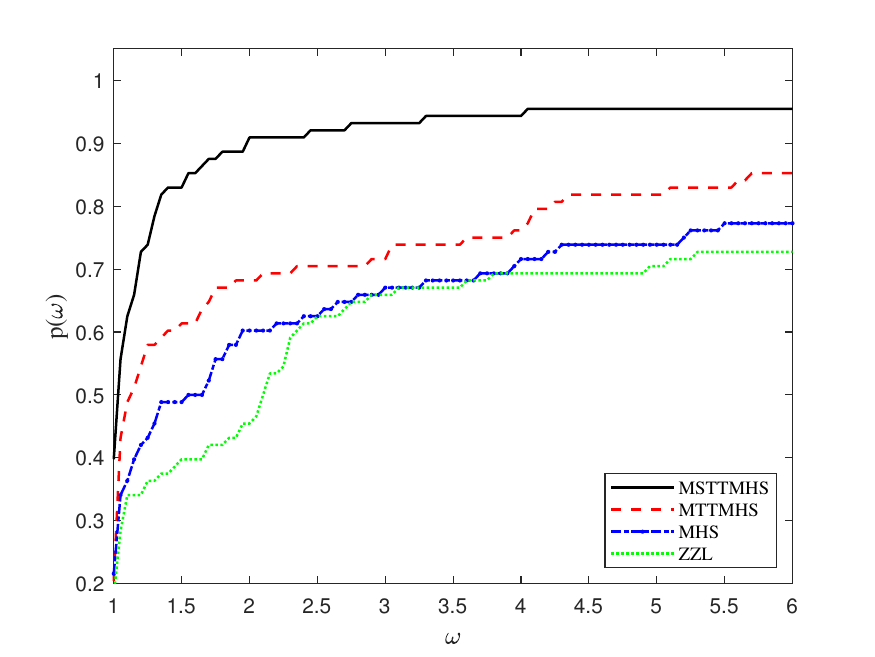}}
\caption{Results of computational comparisons on the CUTEr test problems}
\label{mylabel1}
\end{figure}

To provide supporting evidence for the numerical improvements of MSTTMHS, here, we execute numerical tests on a class of CUTEr test problems \cite{CUTEr} whose specifications have been given in \cite{Khoshsima}. In this regard, all the codes have been implemented by a system whose details have been declared in \cite{Khoshsima}. The Wolfe line search conditions \eqref{wolfe1} and \eqref{wolfe2} have been incorporated into our implementations with $(\delta,\sigma)=(0.0001,0.9900)$. The iterative process terminated when the iteration number $k$ exceeds $10000$, or $\Vert g_k\Vert_\infty<10^{-6}(1+\vert f(x_k)\vert)$. To assess the quality of the outputs, we used the standard performance profile of \cite{DolanMore}, on TNFGE (the total number of function and gradient evaluations \cite{HagerZhang}), and the running time. The outputs have been visualized by Figure \ref{mylabel1}.

As seen, the MSTTMHS method demonstrates superior performance compared to the other algorithms. In addition, the outputs show that all the modified methods (with an augmented denominator) outperform the classic three-term ZZL method. This fact emphasizes that, generally, our modification scheme, which originates from the modification approach of Chen, Kuang, and Yan \cite{ChenKuangYan} conducted on the classic PRP method, turns out to be effective in practice. The plots also depict the promising numerical performance of the modified three-term method MTTMHS versus the two-term modified HS method MHS.

\section{A reduced quadratic surface SVM with the rescaled loss for robust binary classification}
\label{sec:rqsvm}

In this section, we introduce a reduced quadratic surface SVM model incorporating the rescaled loss for robust binary classification. Using Algorithm~1 to solve the resulting smooth training problem as an unconstrained optimization model, we compare the proposed model with related SVMs in terms of classification accuracy and training time.

Consider a binary classification problem with training data 
$\mathcal{D}=\{(x_i,y_i)\}_{i=1}^{m}$, where $x_i\in\mathbb{R}^n$ and 
$y_i\in\{-1,+1\}$. Classical support vector machines construct linear 
separators or employ kernel mappings to capture nonlinear patterns. Although 
kernel methods offer considerable flexibility, their performance depends 
strongly on the choice of kernel, and they may lead to models that are difficult 
to interpret and computationally expensive for large datasets.
An alternative approach is to construct nonlinear decision boundaries directly 
in the input space. Generally, a quadratic surface support vector machine (QSVM) follows
this strategy by employing the quadratic decision function
\[
f_{W,b,c}(x)=\frac{1}{2}x^{T}Wx+b^{T}x+c,
\]
where $W\in\mathbb{R}^{n\times n}$ is symmetric, $b\in\mathbb{R}^n$, and 
$c\in\mathbb{R}$. This kernel-free formulation retains the ability to model 
nonlinear decision boundaries without relying on implicit feature mappings.

A fully dense matrix $W$ introduces $O(n^2)$ parameters, which can lead to
overfitting and poor interpretability. Various sparsifying strategies,
including $\ell_1$-regularization \cite{moosaei2023sparse,mousavi2022quadratic},
structured sparsity, and explicit $\ell_0$-constraints \cite{mousavi2025l0},
aim to mitigate this issue. Here, we adopt a reduced formulation by
imposing a diagonal structure on $W$, i.e., $W=\mathrm{Diag}(w)$ for
$w\in\mathbb{R}^n$ \cite{GaoWang}. This eliminates interaction terms while
retaining nonlinear feature-wise curvature. With this structure, the
decision function becomes
\begin{equation*}
f(x)=\frac{1}{2}\sum_{j=1}^{n}w_jx_j^2+b^{T}x+c.
\end{equation*}

Define the least-squares residual $\gamma_i=1-y_i f(x_i)$, $i=1,2,...,m$. Squared losses
penalize large residuals aggressively; we therefore use the rescaled loss
of \cite{Aminifard2027}, i.e.,
\[
1-\exp(-\eta\gamma_i^2),\qquad i=1,2,...,m,
\]
with the constant $\eta>0$, which is quadratic near zero and saturates for large $|\gamma_i|$. Because
the loss depends on $\gamma_i^2$, it represents a bounded least-squares
residual rather than a one-sided hinge loss. The resulting model, denoted
by R--RQSVM in analogy with the rescaled (R) twin SVM models, is
\begin{equation*}
\min_{w,b,c}
\sum_{i=1}^{m}
\bigl\|\mathrm{Diag}(w)x_i+b\bigr\|_2^2
+C\sum_{i=1}^{m}
\bigl(1-\exp(-\eta\gamma_i^2)\bigr),
\end{equation*}
where $C>0$ is the penalty parameter.

To obtain a vectorized quadratic representation, define
\[
q_i:=\frac{1}{2}(x_i\odot x_i),\qquad
r_i:=\begin{bmatrix}q_i\\ x_i\end{bmatrix},\qquad
z:=\begin{bmatrix}w\\ b\end{bmatrix}\in\mathbb{R}^{2n},
\]
so that $f(x_i)=z^{T}r_i+c$. Moreover,
\[
Wx_i+b=\mathrm{Diag}(x_i)w+b=H_i z,\qquad
H_i:=\bigl[\mathrm{Diag}(x_i)\ \ I_n\bigr]
\in\mathbb{R}^{n\times 2n}.
\]
Thus,
\begin{equation*}
\sum_{i=1}^{m}\|Wx_i+b\|_2^2
=\sum_{i=1}^{m}\|H_i z\|_2^2
=\frac{1}{2}z^{T}Gz,\qquad
G:=2\sum_{i=1}^{m}H_i^{T}H_i\succeq 0.
\end{equation*}
Since $z\in\mathbb{R}^{2n}$ and $H_i\in\mathbb{R}^{n\times 2n}$, one has
$G\in\mathbb{R}^{2n\times 2n}$. Compared with the full QSVM, the reduced
model scales linearly with $n$. The compact formulation, here called R--RQSVM$'$, is defined by
\begin{equation*}
\min_{z,c}
\frac{1}{2}z^{T}Gz
+C\sum_{i=1}^{m}
\left(
1-\exp\left[
-\eta\bigl(1-y_i(z^{T}r_i+c)\bigr)^2
\right]
\right).
\end{equation*}
The objective function in R--RQSVM$'$ is unconstrained and
continuously differentiable. It can therefore be minimized by Algorithm~1,
and the sufficient descent and global convergence properties established
in Section~3 apply without requiring a convexity assumption. Furthermore,
a single kernel-free quadratic surface with $O(n)$ parameters yields a
smooth training problem of the type handled by Algorithm~1, in contrast
to the twin linear SVMs.

Next, we compare R--RQSVM with four linear least-squares twin SVM models
that employ either the rescaled (R) or rational (Ra) loss: R--LS--TSVM,
Ra--LS--TSVM, R--LS--GTPSVM, and Ra--LS--GTPSVM \cite{Aminifard2027}.
Table~\ref{tab:r-rqsvm-performance} reports the classification accuracy and
CPU time on eight standard datasets, while Table~\ref{tab:friedman_time}
reports the Friedman ranks for CPU time, in which rank $1$ indicates the best
performance.

As the outputs show, in terms of accuracy, R--RQSVM is competitive with the four linear twin SVM models, even though each twin baseline fits two nonparallel linear classifiers, whereas R--RQSVM trains a single reduced quadratic surface. Across the eight datasets, the accuracies of all five methods lie within a common range, and no
method uniformly dominates the others; we therefore do not claim a predictive
advantage for R--RQSVM on this suite. In terms of computational cost, the
distinction is clear: R--RQSVM is the fastest method on all eight datasets and
obtains the lowest mean rank (i.e., $1.00$). A Friedman test on CPU time yields
$\chi_F^2=24.80$ with $df=4$ and $p=5.52\times10^{-5}$, so the null hypothesis
of equal running times is rejected. Part of this gap is structural: R--RQSVM
solves a single training problem, whereas the twin models solve either two
class-wise problems or one coupled two-plane problem. Taken together, these
results show that a single reduced quadratic surface trained with the rescaled
loss attains accuracy comparable to that of linear twin SVMs while requiring
less training time.

\begin{table}[H]
\centering
\caption{Classification accuracy and CPU time of R--RQSVM and four
least-squares twin SVM models}
\label{tab:r-rqsvm-performance}
\begin{tabular}{llcc}
\specialrule{1.5pt}{0pt}{0pt}
\textbf{Dataset} & \textbf{Model} & \textbf{Accuracy (\%)} & \textbf{Time (s)}\\
\specialrule{1.5pt}{0pt}{0pt}
\multirow{5}{*}{Iris}
& R--LS--TSVM    & $100.00 \pm 0.00$ & $0.58$ \\
& Ra--LS--TSVM   & $100.00 \pm 0.00$ & $0.61$ \\
& R--LS--GTPSVM  & $100.00 \pm 0.00$ & $0.57$ \\
& Ra--LS--GTPSVM & $100.00 \pm 0.00$ & $0.65$ \\
& R--RQSVM        & $100.00 \pm 0.00$ & $0.18$ \\
\specialrule{.5pt}{0pt}{0pt}
\multirow{5}{*}{Hayes--Roth}
& R--LS--TSVM    & $74.57 \pm 0.25$ & $0.85$ \\
& Ra--LS--TSVM   & $75.76 \pm 0.28$ & $0.82$ \\
& R--LS--GTPSVM  & $73.00 \pm 0.32$ & $0.91$ \\
& Ra--LS--GTPSVM & $74.00 \pm 0.30$ & $0.89$ \\
& R--RQSVM        & $74.60 \pm 0.77$ & $0.73$ \\
\specialrule{.5pt}{0pt}{0pt}
\multirow{5}{*}{Diabet}
& R--LS--TSVM    & $75.78 \pm 0.35$ & $3.35$ \\
& Ra--LS--TSVM   & $77.09 \pm 0.32$ & $2.92$ \\
& R--LS--GTPSVM  & $75.10 \pm 0.38$ & $3.21$ \\
& Ra--LS--GTPSVM & $75.23 \pm 0.36$ & $3.49$ \\
& R--RQSVM        & $76.31 \pm 0.33$ & $1.53$ \\
\specialrule{.5pt}{0pt}{0pt}
\multirow{5}{*}{Cancer}
& R--LS--TSVM    & $94.13 \pm 0.22$ & $2.25$ \\
& Ra--LS--TSVM   & $92.38 \pm 0.25$ & $2.10$ \\
& R--LS--GTPSVM  & $95.52 \pm 0.20$ & $2.40$ \\
& Ra--LS--GTPSVM & $94.52 \pm 0.23$ & $2.30$ \\
& R--RQSVM        & $95.28 \pm 0.21$ & $1.17$ \\
\specialrule{.5pt}{0pt}{0pt}
\multirow{5}{*}{Heart}
& R--LS--TSVM    & $84.89 \pm 0.42$ & $5.25$ \\
& Ra--LS--TSVM   & $82.96 \pm 0.45$ & $4.40$ \\
& R--LS--GTPSVM  & $85.85 \pm 0.38$ & $5.75$ \\
& Ra--LS--GTPSVM & $81.59 \pm 0.44$ & $5.86$ \\
& R--RQSVM        & $83.13 \pm 0.56$ & $1.70$ \\
\specialrule{.5pt}{0pt}{0pt}
\multirow{5}{*}{Australian}
& R--LS--TSVM    & $85.52 \pm 0.28$ & $8.05$ \\
& Ra--LS--TSVM   & $83.29 \pm 0.31$ & $7.78$ \\
& R--LS--GTPSVM  & $86.03 \pm 0.26$ & $9.99$ \\
& Ra--LS--GTPSVM & $84.71 \pm 0.29$ & $8.87$ \\
& R--RQSVM        & $83.13 \pm 0.56$ & $3.28$ \\
\specialrule{.5pt}{0pt}{0pt}
\multirow{5}{*}{Sonar}
& R--LS--TSVM    & $85.76 \pm 0.52$ & $12.75$ \\
& Ra--LS--TSVM   & $82.66 \pm 0.55$ & $10.87$ \\
& R--LS--GTPSVM  & $81.31 \pm 0.58$ & $14.12$ \\
& Ra--LS--GTPSVM & $82.53 \pm 0.54$ & $11.61$ \\
& R--RQSVM        & $83.38 \pm 0.30$ & $4.28$ \\
\specialrule{.5pt}{0pt}{0pt}
\multirow{5}{*}{Splice}
& R--LS--TSVM    & $81.00 \pm 0.48$ & $20.93$ \\
& Ra--LS--TSVM   & $81.60 \pm 0.46$ & $20.25$ \\
& R--LS--GTPSVM  & $80.30 \pm 0.51$ & $23.26$ \\
& Ra--LS--GTPSVM & $80.10 \pm 0.49$ & $22.42$ \\
& R--RQSVM        & $82.50 \pm 0.40$ & $4.08$ \\
\specialrule{1.5pt}{0pt}{0pt}
\end{tabular}
\end{table}

\begin{table}[H]
\centering
\scriptsize
\setlength{\tabcolsep}{3pt}
\caption{Friedman ranks based on CPU execution time}
\label{tab:friedman_time}
\begin{tabular}{@{}lccccc@{}}
\specialrule{1.5pt}{0pt}{0pt}
\textbf{Dataset} &
\textbf{R--LS--TSVM} &
\textbf{Ra--LS--TSVM} &
\textbf{R--LS--GTPSVM} &
\textbf{Ra--LS--GTPSVM} &
\textbf{R--RQSVM} \\
\specialrule{1pt}{0pt}{0pt}
Iris        & $3$ & $4$ & $2$ & $5$ & $1$ \\
Hayes--Roth & $3$ & $2$ & $5$ & $4$ & $1$ \\
Diabet      & $4$ & $2$ & $3$ & $5$ & $1$ \\
Cancer      & $3$ & $2$ & $5$ & $4$ & $1$ \\
Heart       & $3$ & $2$ & $4$ & $5$ & $1$ \\
Australian  & $3$ & $2$ & $5$ & $4$ & $1$ \\
Sonar       & $4$ & $2$ & $5$ & $3$ & $1$ \\
Splice      & $3$ & $2$ & $5$ & $4$ & $1$ \\
\specialrule{1pt}{0pt}{0pt}
\textbf{Mean rank} &
$\mathbf{3.25}$ &
$\mathbf{2.25}$ &
$\mathbf{4.25}$ &
$\mathbf{4.25}$ &
$\mathbf{1.00}$ \\
\specialrule{.5pt}{0pt}{0pt}
\multicolumn{6}{c}{
$\chi_F^2=24.80$,\quad $df=4$,\quad $p=5.52\times 10^{-5}$
} \\
\specialrule{1.5pt}{0pt}{0pt}
\end{tabular}

\end{table}

\section{Conclusions and future works}\label{conclusions}
We have suggested a class of three-term conjugate gradient algorithms that benefit from a spectral parameter, to be set by employing the approximate second-order information. Our modification plan originates from a recent modified Polak--Ribi\`{e}re--Polyak parameter proposed by Chen, Kuang, and Yan \cite{ChenKuangYan}. Actually, suggesting a revised version of the Hestenes--Stiefel parameter, we have developed a modified secant equation that effectively helps to determine the spectral parameter in the framework of a least-squares model. Our algorithm preserves the sufficient descent property and, as a result, it converges globally for general cost functions. At the same time, the algorithm is capable of performing approximate restarts when the iterations accumulate far from the solution. In another part of our study, we have introduced a revised reduced quadratic surface SVM (R--RQSVM), for robust binary classification. The proposed model combines a diagonal quadratic decision surface with the rescaled loss, yielding a kernel-free, smooth, and unconstrained training problem with only $O(n)$ parameters, which can be efficiently solved by the suggested three-term conjugate gradient algorithm. In contrast to linear twin SVMs that require two nonparallel classifiers, R--RQSVM employs a single reduced quadratic surface while retaining nonlinear classification capability.

Although both the theoretical analysis and numerical experiments yield satisfactory results, it can be a matter of further research to study other possible settings for the spectral parameter. In this regard, we have utilized the ellipsoid vector norm within our least-squares model, as an extension of the $\ell_2$ (Euclidean) norm \cite{MMA}. By the way, the obtained formulas were to some extent complex, an issue that may heavily influence the computational cost of the algorithm, especially in high dimensions. Utilizing the cost function values alongside the first-order derivative in the algorithm could enhance the accuracy as well, an objective that may be available by modifying the secant equation. Regarding to the given revised SVM model, future research may focus on extending the proposed R--RQSVM model by considering more flexible structured forms of the quadratic matrix of the model, while preserving its reduced computational complexity, and further investigating its robustness and scalability on high-dimensional and large-scale classification problems.

\section*{Declarations}
\noindent
\textbf{Ethics approval and consent to participate.} This study did not involve human participants or animals and therefore did not require ethical approval.\\

\noindent
\textbf{Consent for publication.} This study does not contain any identifiable data pertaining to individuals or organizations and therefore does not require consent for publication.\\

\noindent
\textbf{Availability of data and material.} The datasets and materials used and/or analyzed during the current study can be available from the corresponding author on reasonable request.\\

\noindent
\textbf{Competing interests.} There are no competing interests to declare by the authors with respect to this article.\\

\noindent
\textbf{Funding.}  No funding, grants, or other support were received.\\

\noindent
\textbf{Authors' contributions.} The third author was primarily responsible for Section 5, while the first two authors jointly contributed to most of the other parts of the study.\\

\noindent
\textbf{Acknowledgements.} The authors thank Dr. Reza Ghanbari from Ferdowsi University of Mashhad (IRAN) for his consultations in preparing the codes.


\end{document}